\def\Ebb{\mathbb{E}}
\def\Rbb{\mathbb{R}}
\theoremstyle{plain}
\newtheorem{thm}{Theorem}[section]
\theoremstyle{remark}
\newtheorem{rem}[thm]{Remark}
\theoremstyle{definition}
\numberwithin{equation}{section}
\numberwithin{figure}{section}
\numberwithin{table}{section}
\algnewcommand\algorithmicto{\textbf{to}}
\title{Weak approximation of Schr{\"o}dinger-F{\"o}llmer diffusion}
\author{Koya Endo\thanks{E-mail: kouya.endou@gmail.com}}
\author[1]{Yumiharu Nakano\thanks{Corresponding author. E-mail: nakano@c.titech.ac.jp}}
\affil[1]{Department of Mathematical and Computing Science, School of Computing, 
 	    Tokyo Institute of Technology}
\date{\today}
\begin{document}

\maketitle

\begin{abstract}
We consider the weak convergence of the Euler-Maruyama approximation for Schr{\"o}dinger-F{\"o}llmer diffusions, which are solutions of 
Schr{\"o}dinger bridge problems and can be used for sampling from given distributions. 
We show that the distribution of the terminal random variable of the time-discretized process weakly converges to the target one under mild regularity conditions. 
\begin{flushleft}
{\bf Key words}: Schr{\"o}dinger-F{\"o}llmer difusion, Schr{\"o}dinger bridges, weak approximation, Euler-Maruyama methods, Markov chain Monte Carlo. 
\end{flushleft}
\begin{flushleft}
{\bf AMS MSC 2020}: 
60H35, 62D05. 
\end{flushleft}
\end{abstract}



\section{Introduction}\label{sec:1}

In this paper we are concerned with a weak solution of the following stochastic differential equation: 
\begin{equation}
\label{SFD}
dX_t = \nabla \log h(t,X_t) dt + dB_t, \quad 0\le t\le T, \;\;  X_0 = 0. 
\end{equation}
Here, $\nabla$ denotes the gradient operator, $\{B_t\}_{0\le t\le T}$ is a $d$-dimensional standard Brownian motion on some probability space, and 
for a given Borel probability measure $\mu$ on $\mathbb{R}^d$ with positive density $\rho$ the function $h$ is defined by 
\begin{equation}
\label{def:h}
 h(t,x) := \int_{\mathbb{R}^d}\varphi(y)p_{T-t}(x,y)dy, \quad (t,x)\in [0,T]\times\mathbb{R}^d, 
\end{equation}
with 
\begin{equation}
\label{def:varphi}
 \varphi(y) = \frac{\rho(y)}{p_T(0,y)}, \quad y\in\mathbb{R}^d, 
\end{equation}
where $p_s$ is the transition density function of $\{B_t\}$. 

Eq.~\eqref{SFD} appears in the so-called {\it Schr{\"o}dinger bridge problem} between the Dirac measure $\delta_0$ with mass at origin and $\mu$. 
To explain the Schr{\"o}dinger bridge problem briefly, let $\mathbb{W}^d = C([0,T], \Rbb^d)$ be the space of all $\Rbb^d$-valued continuous functions on $[0,T]$. 
Denote by $\mathcal{P}(\mathbb{W}^d)$ the totality of Borel probability measures on $\mathbb{W}^d$. 
Assume here that \eqref{SFD} has a weak solution and denote by $P^*$ the corresponding path measure on $\mathbb{W}^d$. 
Further denote by $D_{KL}(Q\,\|\, P)$ the Kullback-Leibler divergence or the relative entropy of $Q\in\mathcal{P}(\mathbb{W}^d)$ with respect to 
the Wiener measure $P\in\mathcal{P}(\Rbb^d)$, 
defined by   
\[
 D_{KL}(Q\,\|\,P) = 
 \begin{cases}
   \Ebb_Q\left[\log\dfrac{dQ}{dP}\right],  & \text{if}\;\; Q \ll P, \\
  +\infty, &\text{otherwise}, 
 \end{cases}
\]
where $\mathbb{E}_R$ denotes the expectation with respect to a probability measure $R$ on a measurable space. 
If $D_{KL}(P^*\,\|\,P)<\infty$, then $P^*$ is a unique solution of 
the minimization problem of $D_{KL}(Q\,\|\,P)$ over all $Q\in\mathcal{P}(\mathbb{W}^d)$ such that 
the marginal distributions of $Q$ at time $0$ and $1$ are given by $\delta_0$ and $\mu$, respectively.

The name Schr{\"o}dinger bridge problem comes from Erwin Schr{\"o}dinger's works 
\cite{schr:1931} and \cite{schr:1932}. His aim was to study a transition probability that most likely occurs under 
constraints on the initial and terminal time distributions of the empirical measures of independent Brownian particles.  
We refer to Chetrite et al.~\cite{chetrite-etal:2021}, an english translation  of \cite{schr:1931}, 
for an exposition of Schr{\"o}dinger's original approach. 
F{\"o}llmer \cite{fol:1988} discovers that such problem is nothing but the one of large deviation  
and is nearly equivalent to the minimization problem above. Moreover,  \cite{fol:1988} shows the optimality of $P^*$. 
In particular, the marginal distribution $P^*$ at time $T$ coincides with $\mu$. 
We refer to, e.g., Chen et al.~\cite{chen-etal:2021} and L{\'e}onard \cite{leo:2013} for a detailed survey of Schr{\"o}dinger's bridges.

Sampling from a given probability distribution is an important issue, primarily in fields such as statistics and machine learning. 
For instance, in Bayesian estimation, the accuracy of sampling from an unnormalized posterior distribution is a critical concern. 
Thus, various sampling methods have been studied, including Markov chain Monte Carlo methods and the Langevin samplers.
Recently, Huang et al.~\cite{hua-etal:2021} proposes to use Schr{\"o}dinger bridges as a sampler of $\mu$. 
Based on this sampling method, Dai et al.~\cite{dai-etal:2023} applies Schr{\"o}dinger bridges to global optimization. 
Following \cite{hua-etal:2021} and \cite{dai-etal:2023}, we call a weak solution of \eqref{SFD} the {\it Schr{\"o}dinger-F{\"o}llmer diffusion}. 

In sampling applications, we need to consider a time discretization of the Schr{\"o}dinger-F{\"o}llmer diffusion, which are also discussed in 
\cite{hua-etal:2021} and \cite{dai-etal:2023}. The both studies assume that the drift function in \eqref{SFD} is of linear growth, Lipschitz continuous in 
$x$, and $1/2$-H{\"o}lder continuous in $t$ on $[0,T]\times\Rbb^d$, and then  
justify the Euler-Maruyama method for \eqref{SFD} as the {\it strong approximation}. 
In many practical situations, e.g., in Bayesian estimation,  
the density $\rho$ has a complicated form, and so in general it may be difficult to check that the drift function satisfies these conditions.  
In the present paper, we aim to find weaker sufficient conditions for which the Euler-Maruyama method for the Schr{\"o}dinger-F{\"o}llmer diffusion converges. 
In particular, we discuss the {\it weak approximation} of \eqref{SFD}, carried out in the next section.

\section{Main results}

Let $\mathcal{P}(\mathbb{R}^d)$ be the set of all Borel probability measures on $\mathbb{R}^d$.
Let $\mu\in\mathcal{P}(\Rbb^d)$ with density $\rho$ satisfying the following condition:
\begin{enumerate}
\item[(A1)] The function $\rho$ is positive on $\mathbb{R}^d$ such that 
\[
 \int_{\Rbb^d}\rho(y)(|y|^2 + \log\rho(y))dy < \infty. 
\]
\end{enumerate}
Under (A1) there exists a weak solution of \eqref{SFD} and the corresponding path measure $P^*$ satisfies 
\[
 D_{KL}(P^*\,\|\,P) = \Ebb_{P^*}\int_0^T|\nabla \log h(t,x_t)|^2 dt <\infty, 
\]
where $\{x_t\}$ is the coordinate map on $\mathbb{W}^d$ (see \cite{fol:1988} and Dai-Pra \cite{dai:1991}). 

Let us consider the Euler-Maruyama approximation of the weak solution of \eqref{SFD}. 
To this end, let $(\Omega,\mathcal{F},\mathbb{P})$ be an atomless and complete probability space, and 
$\{Z_i\}_{i=1}^n$ an IID sequence on $(\Omega,\mathcal{F},\mathbb{P})$ such that 
each $Z_i$ is a $d$-dimensional standard normal vector. 
Let $\{t_i\}_{i=0}^n$ be given by $t_i=iT_n$, $i=0,\ldots,n$. 
Define $\{Y_i\}_{i=0}^n$ by 
\begin{equation}
\label{sampler1}
Y_{i+1} = Y_i + b(t_i,Y_i) h + \sqrt{h} Z_{i+1}, \quad i=0, 1, \cdots, n-1, 
\end{equation}
with $Y_0=0$, where $h=T/n$ and  
\[
b(t, y) = \nabla \log h(t,y) = \frac{\nabla \Ebb[ \varphi(y + \sqrt{T-t} Z)]}{\Ebb[\varphi(y + \sqrt{T-t} Z)]}, \quad (t,y)\in [0,T)\times\Rbb^d, 
\]
where $\mathbb{E}=\mathbb{E}_{\mathbb{P}}$. 

In the framework of weak solutions, it is convenient to work under the boundedness of drift functions. 
To this end, we shall impose the following condition on $\mu$: 
\begin{enumerate}
\item[(A2)] The function $\log \varphi$ is Lipschitz continuous on $\mathbb{R}^d$. 
\end{enumerate}

Here is our first main result.  
\begin{thm}
\label{thm:1st}
Let $\{Y_i\}_{i=0}^n$ be as in $\eqref{sampler1}$. Under $(A1)$ and $(A2)$, 
the distribution of $Y_n$ under $\mathbb{P}$ weakly converges to $\mu$ as $n\to\infty$. 
\end{thm}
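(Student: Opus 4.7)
The plan is to realize both $\mathrm{Law}(Y_n)$ and $\mu$ as time-$T$ marginals of path measures on $\mathbb{W}^d$ obtained from the Wiener measure by a Girsanov transform, and then to compare the corresponding densities in $L^1$. Write $P$ for the Wiener measure on $\mathbb{W}^d$, $\{x_t\}$ for its coordinate process, and set
\[
L^* = \exp\Bigl(\int_0^T b(t,x_t)\cdot dx_t - \tfrac12\int_0^T |b(t,x_t)|^2 dt\Bigr), \quad L_n = \exp\Bigl(\int_0^T b_n(t,x)\cdot dx_t - \tfrac12\int_0^T |b_n(t,x)|^2 dt\Bigr),
\]
where $b_n(t,\omega) := b(t_i, x_{t_i}(\omega))$ on $[t_i, t_{i+1})$. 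By the F\"ollmer/Dai-Pra representation already recalled in the introduction, $dP^*/dP = L^*$ and $P^*\circ x_T^{-1}=\mu$; by Girsanov's theorem, under $dP_n := L_n\,dP$ the skeleton $(x_{t_0},\ldots,x_{t_n})$ has the same law as $(Y_0,\ldots,Y_n)$ under $\mathbb{P}$. Hence, once we prove $L_n\to L^*$ in $L^1(P)$, it follows that $\|P_n-P^*\|_{\mathrm{TV}}\to 0$ on $\mathbb{W}^d$, giving $\mathrm{Law}(Y_n)\to\mu$ in total variation and \emph{a fortiori} weakly.

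The key enabling fact is that (A2) forces $b$ to be bounded. Writing $L$ for the Lipschitz constant of $\log\varphi$, the pointwise bound $\varphi(y+\sqrt{T-t}\,z)\le e^{L|y-y'|}\varphi(y'+\sqrt{T-t}\,z)$ integrates against the law of $Z$ to give $|\log h(t,y)-\log h(t,y')|\le L|y-y'|$ on $[0,T)\times\Rbb^d$. Since $h(t,\cdot)$ is smooth on $[0,T)\times\Rbb^d$ as a Gaussian convolution of the at-most-exponentially-growing $\varphi$, we conclude $|b(t,y)|=|\nabla\log h(t,y)|\le L$ throughout $[0,T)\times\Rbb^d$; a standard dominated-convergence argument, using uniform smoothness of the Gaussian kernel on $T-t\ge\delta>0$, then yields joint continuity of $b$ on $[0,T)\times\Rbb^d$.

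With $b$ bounded, $L_n$ and $L^*$ are true $P$-martingales of unit mean by Novikov. Because $L_n,L^*\ge 0$ and $\Ebb_P L_n=\Ebb_P L^*=1$, Scheff\'e's lemma reduces $L^1(P)$ convergence to convergence in probability, which in turn follows from
\[
\int_0^T b_n(t,x)\cdot dx_t \to \int_0^T b(t,x_t)\cdot dx_t \text{ in } L^2(P), \qquad \int_0^T |b_n(t,x)|^2 dt\to\int_0^T |b(t,x_t)|^2 dt \text{ in } L^1(P).
\]
By It\^o's isometry, the squared $L^2$ error of the stochastic integral equals $\Ebb_P\int_0^T|b_n(t,x)-b(t,x_t)|^2 dt$; joint continuity of $b$ together with sample-path continuity of $x$ gives $b_n(t,x(\omega))\to b(t,x_t(\omega))$ Lebesgue$\otimes P$-almost everywhere, and the uniform bound $L$ closes both convergences by dominated convergence.

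The main obstacle I anticipate is the regularity analysis of $b$ under the minimal assumption (A2): establishing the uniform bound $|b|\le L$ and joint continuity on $[0,T)\times\Rbb^d$ without assuming differentiability of $\varphi$ forces one to exploit the Gaussian smoothing inherent in the definition of $h$, rather than the more direct Lipschitz/growth machinery used in \cite{hua-etal:2021,dai-etal:2023}. Once these qualitative inputs are secured, the remainder is the standard Girsanov--Scheff\'e routine sketched above.
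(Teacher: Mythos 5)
Your proposal is correct, and it reaches the same quantitative conclusion (total variation, not just weak, convergence) as the paper, but by a genuinely different technical route. The paper works on the underlying probability space and performs \emph{two} Girsanov changes of measure: first $\mathbb{P}\to\mathbb{P}^*$ so that the Brownian motion $X$ becomes the Schr\"odinger--F\"ollmer diffusion, then $\mathbb{P}^*\to\hat{\mathbb{P}}$ so that the Euler iterate $\tilde X$ becomes another weak solution; by uniqueness in law this identifies $\hat{\mathbb{P}}\tilde X^{-1}$ with $\mathbb{P}^*X^{-1}$, and the total variation error is estimated from $\mathbb{E}_{\mathbb{P}^*}|Z_T-1|$ by a chain of Burkholder--Davis--Gundy, Cauchy--Schwarz, and Doob's maximal inequality applied to the exponential local martingale $Z$. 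You instead keep a single reference measure (the Wiener measure $P$ on path space), realize both the target diffusion law $P^*$ and the discretized law $P_n$ as $L^* dP$ and $L_n dP$, and invoke Scheff\'e's lemma after checking that $L_n\to L^*$ in $P$-probability via It\^o isometry and bounded convergence. The inputs are the same in substance---the uniform bound $|b|\le L$ and the continuity of $b$ on $[0,T)\times\mathbb{R}^d$---though you derive the bound directly from the Lipschitz inequality $h(t,y)\le e^{L|y-y'|}h(t,y')$ rather than through Rademacher's theorem as the paper does. The trade-off: the paper's martingale-inequality route produces the explicit bound $\delta(\mathcal{L}(Y_n),\mu)\le C\,\mathbb{E}_{\mathbb{P}^*}[\int_0^T|\theta_s^{(n)}|^2\,ds]^{1/2}$, which is in principle amenable to rate analysis, whereas your Scheff\'e argument is softer (purely qualitative) but shorter, avoids the exponential-martingale maximal estimates, and does not require introducing the auxiliary process $\tilde X$ or the second Girsanov transform at all.
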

\begin{rem}
Suppose that $\rho(x)\,\propto\, e^{-V(x)}$. Then a sufficient condition for which (A2) holds is that 
\[
 \tilde{V}(x) := V(x) - \frac{1}{2T}|x|^2, \quad x\in\Rbb^d, 
\]
is globally Lipschitz.  
\end{rem}

\begin{rem}
Huang et al.~\cite{hua-etal:2021} assumes that the drift $b$ is of linear growth, Lipschitz continuous in $x$, and $1/2$-H{\"o}lder continuous in $t$ on 
$[0,T]\times\mathbb{R}^d$. Then they justifies a strong approximation of the Euler-Maruyama method. Remark 4.1 in \cite{hua-etal:2021} states that 
a set of sufficient conditions for which these hold is that $\varphi$ is bounded from below and both $\varphi$ and $\nabla\varphi$ are Lipschitz continuous. 
If $\rho(x)\,\propto\, e^{-V(x)}$ as in the previous remark, then this requires that $\tilde{V}$ has bounded derivatives up to second order, 
which is stronger than our conditions (A1) and (A2). 
\end{rem}

\begin{rem}
It should be noted that the weak convergence of the Euler-Maruyama methods under non-Lipschitz conditions for the drift term has already been studied, 
for example, in Zhang \cite{zha:2019}. To apply the results from \cite{zha:2019} to \eqref{SFD}, the drift function $\nabla \log h(t,x)$ in \eqref{SFD} must be continuous in $t$ 
on $[0,T]$ for any $x$. However, under our conditions (A1) and (A2), the continuity of $t \mapsto \nabla \log h(t,x)$ at $T$ cannot be assured. 
That is, Schr{\"o}dinger-F{\"o}llmer diffusions generally fall outside the scope of existing studies on the weak convergence of the Euler-Maruyama methods.
\end{rem}

\begin{proof}[Proof of Theorem $\ref{thm:1st}$]
Step (i). First we confirm that $b$ can be defined as an essentially bounded function on $[0,T]\times\Rbb^d$.
By (A2), we can apply Rademacher's theorem (see, e.g., Evans \cite{eva:1998}) to find that 
$\nabla\log\varphi$ exists almost everywhere in $\Rbb^d$ and $|\nabla\varphi(x)|=|\varphi(x)\nabla\log\varphi(x)|\le C_0\varphi(x)$ for almost every $x\in\Rbb^d$ 
for some constant $C_0>0$.
Thus, by the dominated convergence theorem
\[
 b(t,x) = \frac{\Ebb[\nabla\varphi(x + \sqrt{T-t}Z)]}{\Ebb[\varphi(x+\sqrt{T-t}Z)]}, \quad \text{a.e.}\, x\in \mathbb{R}^d, \;\; 0\le t\le T.   
\] 
From this representation we can naturally define $b(T,\cdot)$ by  
$b(T,x)=\nabla\varphi(x)/\varphi(x)$ a.e.~$x$. With this definition, we obtain 
$|b(t,x)|\le C_0$ a.e.~ $x\in\Rbb^d$ for any $t\in [0,T]$. 

Step (ii). Let $\{X_t\}_{0\le t\le T}$ be a $d$-dimensional standard Brownian motion on $(\Omega,\mathcal{F},\mathbb{P})$. 
By Step (i), the process $b(t,X_t)$, $0\le t\le T$, is bounded $\mathbb{P}$-a.s., whence 
we can define the probability measure $\mathbb{P}^*$ on $(\Omega,\mathcal{F})$ by 
\[
 \frac{d\mathbb{P}^*}{d\mathbb{P}} = \exp\left[\int_0^Tb(s,X_s)^{\mathsf{T}}dX_s - \frac{1}{2}\int_0^T|b(s,X_s)|^2ds\right], 
\]
where $a^{\mathsf{T}}$ denotes the transposition of a vector $a$. 
Further, by Girsanov's theorem, the process 
\[
 B_t^* := X_t - \int_0^t b(s,X_s)ds, \quad 0\le t\le T, 
\]
is a Brownian motion under $\mathbb{P}^*$. Thus, the $(\Omega,\mathcal{F},\mathbb{F},\mathbb{P}^*,X, B^*)$ is a weak solution of \eqref{SFD}, i.e., 
\begin{equation}
\label{eq:weak-sol1}
 X_t = \int_0^tb(s,X_s)ds + B^*_t. 
\end{equation}
In particular, $\mathbb{P}^*(X_T)^{-1}=\mu$. 

Step (iii). 
Define $\tilde{X}_t$ by 
\[
 \tilde{X}_t = \tilde{X}_{t_i} + b(t_i, \tilde{X}_{t_i})(t-t_i) + B^*_t - B^*_{t_i}, \quad t_i< t\le t_{i+1}, \;\; i=0, \ldots n-1, 
\]
with $\tilde{X}_0=0$. Note that $\mathbb{P}^*(\tilde{X}_T)^{-1}=\mathbb{P}(Y_n)^{-1}$. 
Further $\{\tilde{X}_t\}$ satisfies 
\begin{equation}
\label{eq:weak-sol1}
 \tilde{X}_t = \int_0^t b(\tau(s),\tilde{X}_{\tau(s)})ds + B_t^*, 
\end{equation}
where $\tau_n(s)=\lfloor ns\rfloor / n$ and $\lfloor x\rfloor$ denotes the greatest integer not exceeding $x\in\mathbb{R}$. 
In particular, $\{B_t^*\}$ is adapted to the augmented natural filtration $\mathbb{G}$ generated by $\{\tilde{X}_t\}$. 
Then put $\beta_t=b(\tau_n(t),\tilde{X}_{\tau_n(t)}) - b(t,\tilde{X}_t)$ and consider another probability measure 
$\hat{\mathbb{P}}$ on $(\Omega,\mathcal{F})$ defined by 
\[
 \frac{d\hat{\mathbb{P}}}{d\mathbb{P}^*} = \exp\left[-\int_0^T\beta_s^{\mathsf{T}}dW^*_s - \frac{1}{2}\int_0^T|\beta_s|^2ds\right].
\]
Again by Girsanov's theorem, the process
\[
 \hat{B}_t := B^*_t + \int_0^t\beta_sds, \quad 0\le t\le T, 
\]
is a $\hat{\mathbb{P}}$-Brownian motion. This leads to 
\begin{equation}
\label{eq:weak-sol3}
\tilde{X}_t = \int_0^tb(s,\tilde{X}_s)ds + \hat{B}_t, \quad 0\le t\le T, 
\end{equation}
whence $(\Omega,\mathcal{F},\mathbb{F},\hat{\mathbb{P}},\tilde{X}, \hat{B})$ is also a weak solution of \eqref{SFD}. 
By the uniqueness in law for the weak solution of \eqref{SFD} obtained by Girsanov's theorem, we have 
$\mathbb{P}^*X^{-1} = \hat{P}\tilde{X}^{-1}$ (see Proposition 5.3.10 in Karatzas and Shreve \cite{kar-shr:1991}). 

Now, for any $\Gamma\in \mathcal{B}(\mathbb{W}^d)$, 
\[
 \mathbb{P}^*(X\in\Gamma) = \mathbb{E}_{\hat{\mathbb{P}}}\left[\frac{d\mathbb{P}^*}{d\hat{\mathbb{P}}}1_{\{X\in\Gamma\}}\right] 
 =\mathbb{E}_{\hat{\mathbb{P}}}\left[\exp\left(\int_0^T\beta_s^{\mathsf{T}}d\hat{W}_s - \frac{1}{2}\int_0^T|\beta_s|^2ds\right)1_{\{X\in\Gamma\}}\right]. 
\]
Since $\beta$ is $\mathbb{G}$-adapted, as in the proof of Lemma 2.4 in \cite{kar-shr:1991}, 
we have $\int_0^T\beta_s^{\mathsf{T}}d\hat{B}_s = \lim_{k\to\infty}\int_0^T(\beta_s^{(k)})^{\mathsf{T}}d\hat{B}_s$ a.s. possibly along subsequence for some 
$\mathbb{G}$-adapted simple processes $\{\beta_t^{(k)}\}_{0\le t\le T}$, $k\in\mathbb{N}$. The process $\hat{B}$ is also $\mathbb{G}$-adapted, whence 
there exists a $\mathcal{B}(\mathbb{W}^d)$-measurable map $\Phi$ such that 
\[
 \Phi(\tilde{X})=\exp\left(\int_0^T\beta_s^{\mathsf{T}}d\hat{B}_s - \frac{1}{2}\int_0^1|\beta_s|^2ds\right), \quad\hat{\mathbb{P}}\text{-a.s.}
\]
By exactly the same way, we see 
\[
 \Phi(X) = \exp\left(\int_0^T(\theta^{(n)}_s)^{\mathsf{T}}d B^*_s - \frac{1}{2}\int_0^T|\theta^{(n)}_s|^2ds\right), \quad\mathbb{P}^*\text{-a.s.}, 
\]
where $\theta_s^{(n)} = b(\tau_n(s),X_{\tau_n(s)}) - b(s,X_s)$, $0\le s\le T$. 
This means 
\[
\mathbb{P}^*(X\in\Gamma) = \mathbb{E}_{\mathbb{P}^*}[Z_T1_{\{X\in\Gamma\}}]
\]
where 
\[
 Z_t = \exp\left(\int_0^t(\theta^{(n)}_s)^{\mathsf{T}}d B^*_s - \frac{1}{2}\int_0^t|\theta^{(n)}_s|^2ds\right), \quad 0\le t\le T. 
\]

Step (iii). Let $A\in\mathcal{B}(\Rbb^d)$. Using Birkholder-Davis-Gundy inequality, Cauchy-Schwartz inequality, 
Doob's maximal inequality, and the boundedness of $b$, we observe 
\begin{align*}
\left|\mathbb{P}^*(X_T\in A) - \mathbb{P}^*(\tilde{X}_T\in A)\right| 
&\le \mathbb{E}_{\mathbb{P}^*}|Z_T-1|\le C\mathbb{E}_{\mathbb{P}^*}\left[\left(\int_0^TZ_s^2|\theta^{(n)}_s|^2ds\right)^{1/2}\right]  \\ 
&\le C\mathbb{E}_{\mathbb{P}^*}\left[\sup_{0\le s\le T}Z_s\left(\int_0^T|\theta^{(n)}_s|^2ds\right)^{1/2}\right] \\ 
&\le C\mathbb{E}_{\mathbb{P}^*}\left[\sup_{0\le s\le T}Z_s^2\right]^{1/2}\mathbb{E}_{\mathbb{P}^*}\left[\int_0^T|\theta^{(n)}_s|^2ds\right]^{1/2} \\ 
&\le C\mathbb{E}_{\mathbb{P}^*}\left[\int_0^T|\theta^{(n)}_s|^2ds\right]^{1/2}. 
\end{align*}
for some positive constants $C$, which may be different from line to line. 
Hence
\[
 \left|\mathbb{P}(Y_n\in A) - \mathbb{P}^*(\tilde{X}_T\in A)\right|\le C\mathbb{E}_{\mathbb{P}^*}\left[\int_0^T|\theta^{(n)}_s|^2ds\right]^{1/2}. 
\]
Now, by the continuity of $b$ on $[0,T)\times\mathbb{R}^d$, 
\[
 \lim_{n\to\infty}b(\tau_n(s),X_{\tau_n(s)}) = b(s, X_s), \quad dt\times d\mathbb{P}^*\text{-a.e.}. 
\]
Thus, applying the bounded convergence theorem, we obtain 
\[
\lim_{n\to\infty}\mathbb{E}_{\mathbb{P}^*}\left[\int_0^T|\theta^{(n)}_s|^2ds\right]=0. 
\]
Consequently, we have shown that the total variation distance between $\mathbb{P}(Y_n)^{-1}$ and $\mu$ converges to zero 
as $n\to\infty$. 
\end{proof}

Next, we shall consider the case where the density $\rho$ may have a compact support. Again we impose the following integrability condition as in (A1): 
\begin{enumerate}
\item[(A3)] With usual convention $0\cdot \log 0 =0$, we have 
\[
 \int_{\Rbb^d}\rho(y)(|y|^2 + \log\rho(y))dy < \infty. 
\]
\end{enumerate}
It should be emphasized here that $\rho$ is not necessarily a positive function, whence \eqref{SFD} may not have a weak solution. 
Thus we introduce the approximate distribution $\mu_{\varepsilon}$ with density $\rho_{\varepsilon}$ defined by 
\[
 \rho_{\varepsilon}(y) = (1-\varepsilon)\rho(y) + \varepsilon G_T(y), \quad y\in\mathbb{R}^d, 
\]
where $\varepsilon\in (0,1)$ and 
\[
 G_T(y) = p_T(0,y) = \frac{e^{-|y|^2/(2T)}}{(2\pi T)^{d/2}}, \quad y\in\mathbb{R}^d. 
\]

Since $\lim_{r\searrow 0}r\log r=0$, the function $F(\xi):=\xi\log\xi -\xi + 1$ is nonnegative, continuous, and convex on $[0,\infty)$. Thus, 
\[
 0\le \int_{\Rbb^d}\rho_{\varepsilon}(y)\log\rho_{\varepsilon}(y)dy \le (1-\varepsilon) \int_{\Rbb^d}\rho(y)\log\rho(y) dy 
  + \varepsilon \int_{\Rbb^d}G_T(y)\log G_T(y)dy < \infty.  
\] 
This means that $\mu_{\varepsilon}$ satisfy (A1), and so there exists a weak solution of 
\begin{equation}
\label{SFD:eps}
 dX_t = \nabla\log h_{\varepsilon}(t,X_t)dt + dB_t, \quad X_0=0. 
\end{equation}
Here, $h_{\varepsilon}$ is defined as in \eqref{def:h} with $\rho$ replaced by $\rho_{\varepsilon}$, i.e., 
\[
 h_{\varepsilon}(t,x) = \int_{\Rbb^d}\varphi_{\varepsilon}(x + \sqrt{T-t}z)G_1(z)dz, \quad (t,x)\in [0,T]\times\mathbb{R}^d
\] 
where
\[
 \varphi_{\varepsilon}(y) = \frac{\rho_{\varepsilon}(y)}{G_T(y)} = (1-\varepsilon)\varphi(y) + \varepsilon, \quad y\in\Rbb^d, 
\]
and $\varphi$ is as in \eqref{def:varphi}, i.e., 
\[
 \varphi(y) = \frac{\rho(y)}{G_T(y)}, \quad y\in\Rbb^d. 
\]

Our plan is first to construct a weak approximation of \eqref{SFD:eps} and then letting $\varepsilon\to 0$. 
Let $\{Z_i\}_{i=1}^n$ and $\{t_i\}_{i=0}^n$ be as above. 
Define $\{Y_i\}_{i=0}^n$ by 
\begin{equation}
\label{sampler2}
Y_{i+1} = Y_i + b_{\varepsilon} (t_i,Y_i) h + \sqrt{h} Z_{i+1}, \quad i=0, 1, \cdots, n-1, 
\end{equation}
with $Y_0=0$, where $h=T/n$ and for $(t,y)\in [0,T)\times\Rbb^d$, 
\[
b_{\varepsilon}(t, y) = \nabla \log h_{\varepsilon}(t,y) = \frac{\nabla \Ebb[ \varphi_{\varepsilon}(y + \sqrt{T-t} Z)]}{\Ebb[\varphi_{\varepsilon}(y + \sqrt{T-t} Z)]} 
                                = \frac{(1-\varepsilon)\nabla\Ebb[\varphi(y + \sqrt{T-t} Z)]}{(1-\varepsilon)\Ebb[\varphi(y + \sqrt{T-t} Z)] + \varepsilon}.  
\]

To guarantee the boundedness of $b_{\varepsilon}$ on $[0,T]\times\mathbb{R}^d$ except null sets, we impose the following condition on $\varphi$: 
\begin{enumerate}
\item[(A4)] There exists a positive constant $C_1$ such that 
\[
 |\varphi(x)-\varphi(y)|\le C_1(1 + \varphi(x) + \varphi(y))|x-y|, \quad x,y\in\Rbb^d. 
\]
\end{enumerate}

As a criterion of convergences, we adopt the total variation distance $\delta$ defined by 
\[
 \delta(\nu_1,\nu_2) = \sup_{A\in\mathcal{B}(\Rbb^d)}\left|\nu_1(A) - \nu_2(A)\right|, \quad \nu_1,\nu_2\in\mathcal{P}(\Rbb^d). 
\]
Then we have the following: 
\begin{thm}
\label{thm:2nd}
Let $\{Y^{\varepsilon}_i\}_{i=0}^n$ be as in $\eqref{sampler2}$. 
Denote by $\mathcal{L}(Y_n^{\varepsilon})$ the distribution of $Y_n^{\varepsilon}$ under $\mathbb{P}$. 
Under $(A3)$ and $(A4)$, we have 
\[ 
 \lim_{\varepsilon\to 0}\lim_{n\to\infty}\delta(\mathcal{L}(Y_n^{\varepsilon}), \mu)=0.  
\] 
\end{thm}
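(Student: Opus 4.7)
The plan is to interpolate through the perturbed measure $\mu_\varepsilon$ using the triangle inequality
\[
 \delta(\mathcal{L}(Y_n^\varepsilon), \mu) \le \delta(\mathcal{L}(Y_n^\varepsilon), \mu_\varepsilon) + \delta(\mu_\varepsilon, \mu),
\]
bounding the first term by Theorem \ref{thm:1st} applied to $\mu_\varepsilon$, and the second term by a direct computation on densities.

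First I would verify that $\mu_\varepsilon$ satisfies both (A1) and (A2), so that Theorem \ref{thm:1st} is applicable. The density $\rho_\varepsilon$ is strictly positive, has finite second moment, and has finite entropy by the convexity estimate for $F(\xi) = \xi\log\xi - \xi + 1$ carried out in the paragraph preceding \eqref{SFD:eps}. For (A2) I would use that $\varphi_\varepsilon = (1-\varepsilon)\varphi + \varepsilon \ge \varepsilon$. Applying Rademacher's theorem to condition (A4) and letting $y \to x$ in the difference quotient yields the a.e.\ bound $|\nabla\varphi(x)| \le C_1(1 + 2\varphi(x))$, so that
\[
 |\nabla\log\varphi_\varepsilon(x)| = \frac{(1-\varepsilon)|\nabla\varphi(x)|}{(1-\varepsilon)\varphi(x) + \varepsilon}
 \le \frac{(1-\varepsilon)C_1(1 + 2\varphi(x))}{(1-\varepsilon)\varphi(x) + \varepsilon},
\]
and the right-hand side is uniformly bounded in $x$ by a constant depending only on $\varepsilon$ and $C_1$. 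Hence $\log\varphi_\varepsilon$ is globally Lipschitz and (A2) holds for $\mu_\varepsilon$.

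Next I would invoke Theorem \ref{thm:1st}, whose Step (iii) in fact establishes convergence in total variation, to conclude that for each fixed $\varepsilon\in(0,1)$,
\[
 \lim_{n\to\infty}\delta(\mathcal{L}(Y_n^\varepsilon), \mu_\varepsilon) = 0.
\]
The perturbation piece is then handled by a direct estimate:
\[
 \delta(\mu_\varepsilon, \mu) = \frac{1}{2}\int_{\Rbb^d}|\rho_\varepsilon(y) - \rho(y)|\,dy = \frac{\varepsilon}{2}\int_{\Rbb^d}|G_T(y) - \rho(y)|\,dy \le \varepsilon,
\]
which vanishes as $\varepsilon\to 0$. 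Letting $n\to\infty$ first and then $\varepsilon \to 0$ closes the argument.

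The step I expect to be the main obstacle is the verification of (A2) for $\varphi_\varepsilon$. Condition (A4) is only a Lipschitz-like inequality modulated by $\varphi(x)+\varphi(y)$, and $\varphi$ itself need be neither uniformly bounded above nor bounded below away from zero; uniform control of $|\nabla\log\varphi_\varepsilon|$ therefore hinges crucially on the additive $\varepsilon$ in the denominator and requires extracting the pointwise gradient bound $|\nabla\varphi|\le C_1(1+2\varphi)$ from (A4) via Rademacher's theorem, in the same spirit as Step (i) of the proof of Theorem \ref{thm:1st}.
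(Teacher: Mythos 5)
Your proposal is correct and takes essentially the same route as the paper's proof: decompose via $\delta(\mathcal{L}(Y_n^\varepsilon),\mu)\le\delta(\mathcal{L}(Y_n^\varepsilon),\mu_\varepsilon)+\delta(\mu_\varepsilon,\mu)$, control the first term by the Girsanov/total-variation argument of Theorem \ref{thm:1st} using the Rademacher-derived bound $|\nabla\varphi|\le C_1(1+2\varphi)$ together with the additive $\varepsilon$ in the denominator of $b_\varepsilon$, and let the second term vanish with $\varepsilon$. The only cosmetic difference is that you verify (A1)--(A2) for $\mu_\varepsilon$ and then invoke Theorem \ref{thm:1st} as a black box, whereas the paper establishes the boundedness of $b_\varepsilon$ directly and re-runs the Theorem \ref{thm:1st} argument inline with $b$ replaced by $b_\varepsilon$.
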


\begin{rem}
Huang et al.~\cite{hua-etal:2021} analyses the weak convergence of $\mathcal{L}(Y_n^{\varepsilon})$ under the condition that 
$b_{\varepsilon}$ is of linear growth, Lipschitz continuous in $x$, and $1/2$-H{\"o}lder continuous in $t$ on $[0,T]\times\mathbb{R}^d$ for any $\varepsilon>0$. 
A set of sufficient conditions for which these hold is 
the Lipschitz continuity of both $\varphi$ and $\nabla\varphi$. See Section 4.3 in \cite{hua-etal:2021}. 
If $\varphi$ is Lipschitz, then $\rho(x)\le C(1+|x|)G_T(x)$ for some constant $C>0$ from which (A3) holds. In this sense, our conditions (A3) and (A4) are 
weaker than those imposed in \cite{hua-etal:2021}.  
\end{rem}

\begin{rem}
Consider the case where $\rho$ is given by the density estimator with triangular kernel, i.e., 
\[
 \rho(x) = \frac{1}{mh_0}\sum_{j=1}^m\left(1 - \left|\frac{x-x_j}{h_0}\right|\right)_+, \quad x\in\mathbb{R} 
\]
for some $x_1,\ldots,x_m\in\mathbb{R}$, $h_0>0$ and $(x)_+=\max(x,0)$ for $x\in\mathbb{R}$. 
Then it is straightforward to see that $\varphi_j(x):=(1-|x-x_j|/h_0)_+e^{|x|^2/(2T)}$ satisfies (A4) and that $\varphi^{\prime}_j$ is not continuous on $\mathbb{R}$. 
Therefore, in this case, the conditions (A3) and (A4) are satisfied, but not the Lipschitz continuity of $\varphi^{\prime}$. 
\end{rem}

\begin{proof}[Proof of Theorem $\ref{thm:2nd}$]
Let us confirm that $b_{\varepsilon}$ can be defined as an essentially bounded function on $[0,T]\times\Rbb^d$. 
By (A4), the function $\varphi$ is locally Lipschitz. This together with Rademacher's theorem tells us that 
$\nabla\varphi$ exists almost everywhere in $\Rbb^d$. Then again by (A4), 
$|\nabla\varphi(x)|\le C_1(1+2\varphi(x))$ for almost every $x\in\Rbb^d$. Thus, applying the dominated convergence theorem, we find 
\[
 b_{\varepsilon}(t, x) = \frac{(1-\varepsilon)\Ebb[\nabla\varphi(x + \sqrt{T-t} Z)]}{(1-\varepsilon)\Ebb[\varphi(x + \sqrt{T-t} Z)] + \varepsilon}, 
 \quad \text{a.e.}\, x\in\Rbb^d, \;\; 0\le t<T.  
\]
From this representation we can naturally define $b_{\varepsilon}(T,\cdot)$ by  
\[
 b_{\varepsilon}(T,x) = \frac{(1-\varepsilon)\nabla\varphi(x)}{(1-\varepsilon)\varphi(x) + \varepsilon}, \quad \text{a.e.}\,x\in\Rbb^d.  
\]
Since $|\nabla\varphi|\le C_1(1+2\varphi)$, we obtain 
\[
 |b_{\varepsilon}(t,x)|\le \frac{C_1(1-\varepsilon)}{\varepsilon} + 2C_1, \quad\text{a.e.}\, x\in\Rbb^d, 
\]
for any $t\in [0,T]$. 
Therefore, the process $b_{\varepsilon}(t,X_t)$, $0\le t\le T$, is bounded $\mathbb{P}$-a.s. 
Thus we can proceed exactly the same way as in the proof of Theorem \ref{thm:1st}, and obtain 
\[
 \delta(\mathcal{L}(Y_n^{\varepsilon}),\mu_{\varepsilon})\le C_{\varepsilon}\mathbb{E}_{\mathbb{P}^*}\left[\int_0^T|\theta^{(n)}_s|^2ds\right]^{1/2}, 
\]
where $\mathbb{P}^*$ and $\theta^{(n)}$ are defined as in the proof of Theorem \ref{thm:1st} with $b$ replaced by $b_{\varepsilon}$ and 
$C_{\varepsilon}$ is a positive constant depending on $\varepsilon$. 
By the continuity of $b_{\varepsilon}$ on $[0,T)\times\mathbb{R}^d$, 
\[
 \lim_{n\to\infty}b_{\varepsilon}(\tau_n(s),X_{\tau_n(s)}) = b_{\varepsilon}(s, X_s), \quad dt\times d\mathbb{P}^*\text{-a.e.}. 
\]
Thus, applying the bounded convergence theorem, we obtain 
\[
\lim_{n\to\infty}\mathbb{E}_{\mathbb{P}^*}\left[\int_0^T|\theta^{(n)}_s|^2ds\right]=0. 
\]
This together with $\lim_{\varepsilon\to 0}\delta(\mu_{\varepsilon},\mu)= 0$ completes the proof of the theorem. 
\end{proof}



\subsection*{Acknowledgements}

This study is supported by JSPS KAKENHI Grant Number JP21K03364.

\bibliographystyle{plain}
\bibliography{../mybib}

\end{document}